\def\detail#1{\ifthenelse{\boolean{@details}}{\marginpar{begin\\
details}#1\marginpar{end\\
details}}{}}
\def\todo#1{\ifthenelse{\boolean{@todoon}}{\textbf{(todo\footnote{#1})}}{}}
\newtheorem{theorem}{Theorem}[section]
\newtheorem{corollary}[theorem]{Corollary}
\newtheorem{lemma}[theorem]{Lemma}
\def\myPhiSym{\ifthenelse{\boolean{@laptop}}%
{\varphi}
{\mathbb{\Phi}}
}
\def\defeq{\stackrel{\mathrm{def}}{=}}
\def\norm#1{\left\| #1 \right\|}
\def\setof#1{\left\{#1  \right\}}
\newcommand{\ceiling}[1]{\left\lceil#1\right\rceil}
\def\trace#1{\mathrm{Tr} \left( #1 \right)}
\def\softO#1{\widetilde{\mathcal{O}} \left( #1 \right)}
\def\bigO#1{O\left(#1  \right)}
\def\setof#1{\left\{#1  \right\}}
\newdimen\pIR
\newcommand\StevesR{{\rm I\kern\pIR R}}
\def\pinv#1{{#1}^{\dagger}}
\newenvironment{fminipage}%
  {\begin{Sbox}\begin{minipage}}%
  {\end{minipage}\end{Sbox}\fbox{\TheSbox}}
\def\stretch#1#2{\textrm{st}_{#1} (#2)}
\begin{document}

\title{
A Note on Preconditioning by Low-Stretch Spanning Trees%
\thanks{
This material is based upon work supported by the National Science Foundation under Grant CCF-0634957.
Any opinions, findings, and conclusions or recommendations expressed in this material are those of the author(s) and do not necessarily reflect the views of the National Science Foundation.
}}

\author{
Daniel A. Spielman \\ 
Department of Computer Science\\
Program in Applied Mathematics\\
Yale University
\and 
Jae Oh Woo\\
Program in Applied Mathematics\\
Yale University}

\maketitle

\begin{abstract}
Boman and Hendrickson~\cite{BomanHendricksonAKPW} observed that one can solve linear systems in
  Laplacian matrices in time $\bigO{m^{3/2 + o (1)} \ln (1/\epsilon )}$ by preconditioning
  with the Laplacian of a low-stretch spanning tree.
By examining the distribution of eigenvalues of the preconditioned linear
  system, we prove that the preconditioned conjugate gradient will actually
  solve the linear system in time $\softO{m^{4/3} \ln (1/\epsilon )}$.
\end{abstract}

\section{Introduction}\label{sec:intro}

For background on the support-theory approach to solving symmetric,
diagonally dominant systems of linear equations, we refer the reader to
  one of~\cite{SupportGraph,SupportTheory,SpielmanTengLinsolve}.

Given a weighted, undirected graph $G = (V, E, w)$, we recall that 
  the Laplacian of $G$ may be defined by
\[
  L_{G} = \sum_{(u,v) \in E} w (u,v) L_{(u,v)},
\]
where $L_{(u,v)}$ is the Laplacian of the weight-1 edge from $u$ to $v$.
This is, $L_{(u,v)}$ is the matrix that is zero everywhere, except for the submatrix
  in rows and columns $\setof{u,v}$ which has form:
\[
\begin{pmatrix}
1 & -1 \\
-1 & 1
\end{pmatrix}.
\]
Note that this last matrix may be written as the outer product of the vector
  $\psi_{u} - \psi_{v}$ with itself, where we let $\psi_{u}$ denote the elementary unit
  vector with a 1 in its $u$-th component.

For a connected graph $G$,
  we recall that a spanning tree of $G$ is a connected graph $T = (V,F, w)$
  where $F$ is a subset of $E$ having exactly $n-1$ edges.
As we intend for the edges that appear in  $T$ to have the same weight
  as they do in $G$, we use the same weight function $w$.
As $T$ is a tree, every pair of vertices of $V$ is connected by a
  unique path in $T$.

For any edge $e \in E$, we now define the \emph{stretch} of $e$ 
  with respect to $T$.
Let $e_{1}, \dotsc , e_{k} \in F$ be the edges
  on the unique path in $T$ connecting the endpoints of $e$.
The \emph{stretch} of $e$ with respect to $T$ is given by
\[
\stretch{T}{e}
= w (e) \left( {\sum_{i=1}^{k} 1/w (e_{i})} \right).
\]
The stretch of the graph $G$ with respect to $T$
  was defined by Alon, Karp, Peleg, and West~\cite{AKPW} to be
\[
\stretch{T}{G}
\defeq 
\sum_{e \in E} \stretch{T}{e}.
\]
A \emph{low-stretch spanning tree} of $G$ is a graph for which
  the above quantity is reasonably small.
The best known bound on attainable stretch was obtained by
  Abraham, Bartal and Neiman~\cite{AbrahamBartalNeiman}, who present an algorithm that,
  on input a graph with $n$ vertices and $m$ edges, runs in time
  $\softO{m}$ and produces a spanning tree $T$ of stretch
  $O (m \log n \log \log n (\log \log \log n)^{3})$.

The advantage of using a spanning tree as a preconditioner is that (after a permutation) 
  one can compute an LU-factorization of the Laplacian of a tree in
  time $O(n)$, and that one can use this LU-factorization to solve
  linear systems in the Laplacian of the tree in linear time as well.
  
\section{Preconditioning}
We prove the following three results.
\begin{theorem}\label{thm:trace}
Let $G = (V,E,w)$ be a connected graph and let $T = (V,F,w)$ be a spanning
  tree of $G$.
Let $L_{G}$ and $L_{T}$ be the Laplacian matrices of $G$ and $T$, respectively.
Then,
\[
\trace{L_{G} \pinv{L_{T}}} = \stretch{T}{G},
\]
where $\pinv{L_{T}}$ denotes the pseudo-inverse of $L_{T}$.
\end{theorem}

As $T$ is a subgraph of $G$, all the nonzero eigenvalues of
 $\trace{L_{G} \pinv{L_{T}}}$ are at least $1$.
The analysis of Boman and Hendrickson~\cite{BomanHendricksonAKPW} followed
  from the fact that  the largest eigenvalue of $L_{G} \pinv{L_{T}}$
  is at most $\stretch{T}{G}$.
We use the bound on the trace to show that not too many of these
  eigenvalues are large.

\begin{corollary}\label{cor:count}
For every $t > 0$, the number of eigenvalues of
  $L_{G} \pinv{L_{T}}$ greater than $t$ is at most $\stretch{T}{G}/t$.
\end{corollary}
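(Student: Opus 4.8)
The plan is to deduce the corollary from Theorem~\ref{thm:trace} by a Markov-type inequality applied to the spectrum of $L_{G}\pinv{L_{T}}$. The engine of the argument is the identity $\trace{L_{G}\pinv{L_{T}}} = \stretch{T}{G}$, combined with the standard fact that the trace of a matrix equals the sum of its eigenvalues. So the proof reduces to turning an upper bound on the \emph{sum} of the eigenvalues into an upper bound on how many of them can be large, and the only genuine thing to verify is that the eigenvalues are nonnegative so that the counting argument goes through.

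First I would record that all eigenvalues of $L_{G}\pinv{L_{T}}$ are real and nonnegative. Although $L_{G}\pinv{L_{T}}$ is not symmetric, on the image of $L_{T}$ (which, for a connected graph, is the space orthogonal to the all-ones vector and coincides with the image of $L_{G}$) it is similar to the symmetric matrix $\pinv{L_{T}}^{1/2} L_{G} \pinv{L_{T}}^{1/2}$, which is positive semidefinite because $L_{G}$ is. Hence the nonzero eigenvalues of $L_{G}\pinv{L_{T}}$ are exactly the eigenvalues of this positive semidefinite matrix and are therefore nonnegative; the remaining eigenvalues are zero. This justifies treating $\trace{L_{G}\pinv{L_{T}}}$ as a sum of nonnegative reals.

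With that in hand, the counting step is immediate. Let $k$ denote the number of eigenvalues of $L_{G}\pinv{L_{T}}$ that exceed $t$. Since every eigenvalue is nonnegative, the total trace is at least the sum of just these $k$ eigenvalues, each of which is strictly greater than $t$; thus
\[
\stretch{T}{G} = \trace{L_{G}\pinv{L_{T}}} \;\geq\; k\,t,
\]
and rearranging gives $k \leq \stretch{T}{G}/t$, as claimed. I do not expect a real obstacle here: the entire content is the trace identity of Theorem~\ref{thm:trace} together with nonnegativity of the spectrum, and the closest thing to a subtlety is simply being careful that the similarity transform is carried out on the correct invariant subspace (the image of the Laplacians) so that no negative eigenvalue can sneak in and invalidate the inequality above.
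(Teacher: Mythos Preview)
Your proposal is correct and follows essentially the same approach as the paper: establish that the eigenvalues of $L_{G}\pinv{L_{T}}$ are real and nonnegative, then combine the trace identity of Theorem~\ref{thm:trace} with a Markov-type counting argument. The paper's own proof is terser (it simply asserts nonnegativity from positive semidefiniteness and says the result is immediate), whereas you spell out the similarity to $\pinv{L_{T}}^{1/2} L_{G} \pinv{L_{T}}^{1/2}$ and the inequality $k\,t \le \trace{L_{G}\pinv{L_{T}}}$ explicitly, but the substance is the same.
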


\begin{theorem}\label{thm:time}
If one uses the preconditioned conjugate gradient (PCG) to solve a linear equation
  in $L_{G}$ while using $L_{T}$ as a preconditioner, it will find a solution
  of accuracy $\epsilon$ in at most $\bigO{\stretch{T}{G}^{1/3} \ln (1/\epsilon )}$ iterations.
\end{theorem}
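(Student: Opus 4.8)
The plan is to use the standard polynomial characterization of the convergence of the preconditioned conjugate gradient. After $k$ iterations, the error of PCG applied to $L_{G}$ with preconditioner $L_{T}$, measured in the energy norm, is bounded by
\[
\min_{p} \max_{\lambda} \abs{p (\lambda)},
\]
where the minimum ranges over polynomials $p$ of degree at most $k$ with $p (0) = 1$, and the maximum ranges over the nonzero eigenvalues $\lambda$ of $M = L_{G} \pinv{L_{T}}$. (Both Laplacians share the same null space for a connected graph, namely the span of the all-ones vector, so PCG effectively operates in its orthogonal complement and only the nonzero eigenvalues matter.) It therefore suffices to exhibit, for $k = \bigO{\stretch{T}{G}^{1/3} \ln (1/\epsilon )}$, a single polynomial of degree $k$ that equals $1$ at the origin and is at most $\epsilon$ in magnitude at every nonzero eigenvalue of $M$.

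To build this polynomial, write $s = \stretch{T}{G}$ and fix the threshold $t = s^{2/3}$. By Corollary~\ref{cor:count}, at most $s/t = s^{1/3}$ eigenvalues of $M$ exceed $t$; call them $\lambda_{1}, \dotsc , \lambda_{\ell}$ with $\ell \le s^{1/3}$. Since $T$ is a subgraph of $G$, every nonzero eigenvalue of $M$ is at least $1$, so the remaining eigenvalues lie in the interval $[1, t]$. Define the root-killing factor
\[
q (x) = \prod_{i=1}^{\ell} \left( 1 - \frac{x}{\lambda_{i}} \right),
\]
which satisfies $q (0) = 1$, vanishes at each large eigenvalue, and has degree $\ell \le s^{1/3}$. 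For $x \in [0, t]$ and $\lambda_{i} > t$, each factor lies in $(0, 1]$, so $0 < q (x) \le 1$ throughout $[0, t]$. Let $r (x)$ be the shifted and scaled Chebyshev polynomial of the interval $[1, t]$ normalized so that $r (0) = 1$; the usual estimate gives $\abs{r (x)} \le \epsilon$ on $[1, t]$ once $\deg r = \bigO{\sqrt{t} \, \ln (1/\epsilon )}$.

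Now set $p = q \cdot r$. Then $p (0) = 1$, $p$ vanishes at every eigenvalue exceeding $t$, and for $\lambda \in [1, t]$ we have $\abs{p (\lambda)} = q (\lambda) \abs{r (\lambda)} \le \abs{r (\lambda)} \le \epsilon$ because $0 < q (\lambda) \le 1$ there. Hence $\max_{\lambda} \abs{p (\lambda)} \le \epsilon$ over all nonzero eigenvalues, and the degree of $p$ is
\[
\deg q + \deg r \le s^{1/3} + \bigO{\sqrt{t} \, \ln (1/\epsilon )} = \bigO{s^{1/3} \ln (1/\epsilon )},
\]
where the final equality uses $t = s^{2/3}$, the choice that balances the count $s/t$ of large eigenvalues against the Chebyshev degree $\sqrt{t}$.

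The main obstacle is arranging that multiplying by the root-killing factor $q$ costs nothing on the small-eigenvalue interval: this is exactly the inequality $0 < q (x) \le 1$ for $x \in [0, t]$, which relies on each $\lambda_{i}$ exceeding $t \ge x \ge 0$ so that every factor $1 - x/\lambda_{i}$ stays in $(0, 1]$. Without this, combining the two factors could amplify $r$ on $[1, t]$ and destroy the bound. The remaining points to verify are routine: the precise form of the PCG error bound together with the null-space bookkeeping described above, and the constant in the Chebyshev estimate, which is absorbed into the logarithm.
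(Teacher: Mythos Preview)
Your proof is correct and follows the same structural arc as the paper: choose the threshold $t = \stretch{T}{G}^{2/3}$, invoke Corollary~\ref{cor:count} to bound the number of outlier eigenvalues by $\stretch{T}{G}^{1/3}$, note that the remaining nonzero eigenvalues lie in $[1,t]$, and then feed this spectrum profile into the standard PCG convergence theory.  The only difference is packaging: the paper cites the Axelsson--Lindskog bound (Theorem~\ref{thm:al}) as a black box, whereas you reprove exactly the special case of that bound needed here via the explicit polynomial $p = q \cdot r$ (root-killing factor for the outliers times a scaled Chebyshev polynomial on the bulk interval).  Your version is more self-contained and makes transparent why the bound $0 < q(x) \le 1$ on $[0,t]$ is what lets the two factors combine without loss; the paper's version is shorter because that work is delegated to the citation.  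Either way, the balancing choice $t = s^{2/3}$ and the resulting $O(s^{1/3}\ln(1/\epsilon))$ iteration count are identical.
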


As the dominant cost of each iteration of PCG is
  the time required to multiply a vector by $L_{G}$, which is $O(m)$,
  and the time required to solve a system of equations in $L_{T}$, which is
  $O(n)$, the low-stretch spanning trees of Abraham, Bartal and Neiman enable
  PCG to run in time
\[
  \bigO{m^{4/3} (\log n)^{1/3} (\log \log n)^{2/3} (\log 1/\epsilon )}.
\]

The following lemma is the key to the proof of Theorem~\ref{thm:trace}.

\begin{lemma}\label{lem:reff}
Let $T = (V, F, w)$ be a tree, let $u, v \in V$, and
  let $x = \psi_{u} - \psi_{v}$.
Then,
\[
  x^{T} \pinv{L_{T}} x = 
  {\sum_{i=1}^{k} 1/w (e_{i})},
\]
where $e_{1}, ..., e_{k}$ are the edges on the unique simple path in $T$
  from $u$ to $v$.
\end{lemma}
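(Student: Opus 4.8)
The plan is to recognize $x^{T} \pinv{L_{T}} x$ as the effective resistance between $u$ and $v$ in $T$ and to exhibit the corresponding electrical potential explicitly. The key point is that, although $L_{T}$ is singular (its kernel is spanned by the all-ones vector $\mathbf{1}$), the vector $x = \psi_{u} - \psi_{v}$ is orthogonal to $\mathbf{1}$ and therefore lies in the range of $L_{T}$. Consequently $\pinv{L_{T}} x$ is the unique vector $\phi$ orthogonal to $\mathbf{1}$ satisfying $L_{T} \phi = x$, and for \emph{any} $\phi$ with $L_{T}\phi = x$ we have $x^{T}\pinv{L_{T}}x = x^{T}\phi$, since two such solutions differ only by a multiple of $\mathbf{1}$, against which $x$ is orthogonal. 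Thus it suffices to produce one vector $\phi$ solving $L_{T}\phi = x$ and to read off $x^{T}\phi = \phi_{u} - \phi_{v}$.

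First I would fix the path. Write the unique $u$--$v$ path as $u = p_{0}, p_{1}, \dotsc, p_{k} = v$, where $e_{i}$ is the edge joining $p_{i-1}$ and $p_{i}$. Deleting the path edges $e_{1}, \dotsc, e_{k}$ from $T$ breaks it into $k+1$ subtrees, each containing exactly one of the vertices $p_{0}, \dotsc, p_{k}$. I would then define $\phi$ to be constant on each subtree, setting $\phi_{z} = \sum_{i=j+1}^{k} 1/w(e_{i})$ whenever $z$ lies in the subtree containing $p_{j}$. This assigns potential $\sum_{i=1}^{k} 1/w(e_{i})$ to the component of $u$ and potential $0$ to the component of $v$, and it is engineered so that the potential drops by exactly $1/w(e_{i})$ across each path edge $e_{i}$.

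Next I would verify $L_{T}\phi = x$ by checking the value of $(L_{T}\phi)_{z} = \sum_{y \sim z} w(z,y)(\phi_{z} - \phi_{y})$ at each vertex. A vertex $z$ not on the path has all of its neighbors inside its own subtree, so every term $\phi_{z} - \phi_{y}$ vanishes and $(L_{T}\phi)_{z} = 0 = x_{z}$. At an interior path vertex $p_{j}$ the off-path neighbors again contribute nothing, while the two path edges contribute $w(e_{j})(\phi_{p_{j}}-\phi_{p_{j-1}}) + w(e_{j+1})(\phi_{p_{j}}-\phi_{p_{j+1}}) = -1 + 1 = 0 = x_{p_{j}}$, using the prescribed drops. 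At the endpoints the single incident path edge yields $(L_{T}\phi)_{u} = +1$ and $(L_{T}\phi)_{v} = -1$, matching $x$. Hence $L_{T}\phi = x$, and $x^{T}\pinv{L_{T}}x = x^{T}\phi = \phi_{u} - \phi_{v} = \sum_{i=1}^{k} 1/w(e_{i})$, as claimed.

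The steps are all routine, and the one place demanding care is the treatment of the pseudo-inverse. I expect the main obstacle to be arguing cleanly that evaluating $x^{T}\pinv{L_{T}}x$ reduces to evaluating $x^{T}\phi$ for an arbitrary solution of $L_{T}\phi = x$ --- that is, that the ambiguity of $\phi$ along the kernel direction $\mathbf{1}$ is annihilated by $x \perp \mathbf{1}$ --- rather than in any part of the explicit potential computation, which is forced by the tree structure.
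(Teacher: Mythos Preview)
Your proof is correct. It differs from the paper's in that the paper simply invokes the identification of $x^{T}\pinv{L_{T}}x$ with the effective resistance between $u$ and $v$ in the network where each edge has resistance $1/w(e)$ (citing~\cite{SpielmanSrivastava}), and then appeals to the fact that only the edges on the $u$--$v$ path matter and that serial resistances add. Your argument is more elementary and fully self-contained: you construct the harmonic potential $\phi$ explicitly on the tree and verify $L_{T}\phi = x$ by a direct flow-balance check at every vertex. The paper's proof is shorter but leans on electrical-network folklore; yours requires no outside facts and makes the structure of the solution transparent (indeed, your $\phi$ is exactly the voltage when one unit of current is routed from $u$ to $v$ along the tree path). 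Either approach is fine here, and your handling of the pseudo-inverse ambiguity via $x\perp\mathbf{1}$ is the right way to make the step rigorous.
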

\begin{proof}
The quantity $x^{T} \pinv{L_{T}} x$ is known to equal the effective resistance
  in the electrical network corresponding to $T$ in which the resistance
  of every edge is the reciprocal of its weight (see, for example, \cite{SpielmanSrivastava}).
As only edges on the path from $u$ to $v$ can contribute to the effective resistance
  in $T$ from $u$ to $v$, the effective resistance is the same as the effective resistance
  of the path in $T$ from $u$ to $v$.
As the effective resistance of resistors in serial is just the sum of their resistances,
  the lemma follows.
\end{proof}

\begin{proof}[Proof of Theorem~\ref{thm:trace}]
We compute
\begin{align*}
\trace{L_{G} \pinv{L_{T}}}
& = 
\sum_{(u,v) \in E} w(u,v) \trace{L_{(u,v)} \pinv{L_{T}} }
\\
& = 
\sum_{(u,v) \in E} w(u,v) \trace{(\psi_{u} - \psi_{v})(\psi_{u} - \psi_{v})^{T} \pinv{L_{T}}} 
\\
& = 
\sum_{(u,v) \in E} w(u,v) \trace{(\psi_{u} - \psi_{v})^{T} \pinv{L_{T}} (\psi_{u} - \psi_{v})} 
\\
& = 
\sum_{(u,v) \in E} w(u,v)   {\sum_{i=1}^{k} 1/w (e_{i})}
\\
\intertext{(where $e_{1}, \dotsc , e_{k}$ are the edges on the simple
path in $T$  from $u$ to $v$)}
& = 
\sum_{(u,v) \in E} \stretch{T}{u,v}
\\
& = 
\stretch{T}{G}.
\end{align*}
\end{proof}

\begin{proof}[Proof of Corollary~\ref{cor:count}]
As both $L_{G}$ and $L_{T}$ are positive semi-definite,
  all the eigenvalues of $L_{G} \pinv{L_{T}}$ are real and non-negative.
The corollary follows immediately.
\end{proof}

To show that the PCG will quickly solve
  linear systems in $L_{G}$ with $L_{T}$ as a preconditioner, we use the analysis
  of Axelsson and Lindskog~\cite[(2.4)]{axelssonLindskog}, which we summarize as
  Theorem~\ref{thm:al}.

\begin{theorem}\label{thm:al}
Let $A$ and $C$ be positive semi-definite matrices with the same nullspace
  such that all but $q$ of the eigenvalues of $A \pinv{C}$ lie in the interval
  $[l,u]$, and the remaining $q$ are larger than $u$.
If $b$ is in the span of $A$ and 
  one uses the Preconditioned Conjugate Gradient with $C$ as a preconditioner
  to solve the linear system $A x = b$, then after
\[
k = q + 
\ceiling{\frac{\ln (2 / \epsilon)}{2} \sqrt{\frac{u}{l}}} 
\]
iterations, the algorithm will produce a solution $x$ satisfying
\[
  \norm{x - \pinv{A} b}_{A} \leq \epsilon \norm{\pinv{A} b}_{A}.
\]
\end{theorem}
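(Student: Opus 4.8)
The plan is to reduce the convergence of PCG to a scalar polynomial-approximation problem over the spectrum, and then exhibit a single polynomial of degree $k$ that annihilates the $q$ outlying eigenvalues for free and is tiny on the well-conditioned bulk $[l,u]$.

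First I would record the optimality property of PCG. Since $A$ and $C$ are positive semi-definite with a common nullspace, their ranges coincide; write $W = \Span{A}$ and note $b \in W$, so $x^{*} = \pinv{A} b \in W$. Restricting every operator to $W$, the preconditioned iteration is ordinary conjugate gradients applied to the symmetric positive definite operator $\hat{A} = (\pinv{C})^{1/2} A (\pinv{C})^{1/2}$ on $W$ (with $(\pinv{C})^{1/2}$ the positive semi-definite square root of $\pinv{C}$), after the change of variable $\tilde{x} = C^{1/2} x$. Conjugating by $(\pinv{C})^{1/2}$ shows that the eigenvalues of $\hat{A}$ are exactly the nonzero eigenvalues of $A \pinv{C}$, and a direct computation gives $\norm{\tilde{x}_{k} - \tilde{x}^{*}}_{\hat{A}} = \norm{x_{k} - x^{*}}_{A}$. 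The textbook energy-minimization property of CG then yields, for $x_{0} = 0$,
\[
\norm{x_{k} - x^{*}}_{A} \;\le\; \min_{\substack{p \in \mathcal{P}_{k} \\ p(0) = 1}} \;\max_{\lambda \in \Lambda} \abs{p(\lambda)} \cdot \norm{x^{*}}_{A},
\]
where $\mathcal{P}_{k}$ is the set of real polynomials of degree at most $k$ and $\Lambda$ is the set of nonzero eigenvalues of $A \pinv{C}$. So it suffices to produce one admissible polynomial $p$ of degree $k = q + m$, with $m = \ceiling{\frac{\ln(2/\epsilon)}{2}\sqrt{u/l}}$, for which $\max_{\lambda \in \Lambda} \abs{p(\lambda)} \le \epsilon$.

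Next I would build $p$ as a product $p = r \cdot s$. Let $\mu_{1}, \dotsc, \mu_{q}$ be the $q$ eigenvalues exceeding $u$ and set
\[
r(\lambda) = \prod_{i=1}^{q} \left( 1 - \frac{\lambda}{\mu_{i}} \right),
\]
a degree-$q$ polynomial with $r(0) = 1$ that vanishes at every $\mu_{i}$. For the remaining factor, let $s$ be the degree-$m$ shifted-and-scaled Chebyshev polynomial normalized by $s(0) = 1$ that is extremal on $[l,u]$, namely $s(\lambda) = T_{m}\!\left(\frac{u + l - 2\lambda}{u - l}\right) \big/ T_{m}\!\left(\frac{u+l}{u-l}\right)$. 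Then $p = r s$ has degree $q + m = k$ and $p(0) = 1$, so it is admissible. Each eigenvalue in $\Lambda$ either equals some $\mu_{i}$, where $p$ vanishes because $r$ does, or lies in $[l,u]$. On $[l,u]$ each factor of $r$ satisfies $0 \le 1 - \lambda/\mu_{i} < 1$ since $0 < \lambda \le u < \mu_{i}$, so $\abs{r(\lambda)} \le 1$ there, while the classical Chebyshev estimate gives
\[
\max_{\lambda \in [l,u]} \abs{s(\lambda)} = \frac{1}{T_{m}\!\left(\frac{u+l}{u-l}\right)} \le 2 \left( \frac{\sqrt{u/l} - 1}{\sqrt{u/l} + 1} \right)^{m}.
\]
The choice of $m$ forces the right-hand side to be at most $\epsilon$: the required inequality $m \ge \ln(2/\epsilon) \big/ \ln\frac{\sqrt{u/l}+1}{\sqrt{u/l}-1}$ follows from $\ln\frac{1+t}{1-t} \ge 2t$ applied with $t = 1/\sqrt{u/l}$. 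Hence $\abs{p(\lambda)} = \abs{r(\lambda)}\,\abs{s(\lambda)} \le \epsilon$ on all of $\Lambda$, and the min-max bound gives $\norm{x_{k} - x^{*}}_{A} \le \epsilon \norm{x^{*}}_{A} = \epsilon \norm{\pinv{A} b}_{A}$.

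The main obstacle is the first step rather than the polynomial juggling: one must justify the energy-norm optimality of PCG carefully in the singular setting, checking that the iterates and residuals stay in the common range $W$ (so that the pseudo-inverse of $C$ behaves like a genuine inverse restricted to $W$) and that the change of variables by $C^{1/2}$ really turns the preconditioned recurrence into plain CG for $\hat{A}$. Once the iteration is faithfully reduced to a min-max problem over $\Lambda$, the product construction is the natural device: one "spends" $q$ extra iterations to delete the $q$ bad eigenvalues with $r$, and then runs the standard Chebyshev argument on the conditioned bulk $[l,u]$ with $s$, the two estimates multiplying precisely because the outliers all exceed $u$ and so keep $\abs{r}\le 1$ on $[l,u]$.
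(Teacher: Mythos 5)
Your proof is correct, but it is genuinely different from what the paper does: the paper does not prove Theorem~\ref{thm:al} at all. It cites Axelsson and Lindskog~\cite[(2.4)]{axelssonLindskog} for the positive definite case and disposes of the semidefinite setting with a one-sentence remark that the extension is ``immediate if one applies the pseudo-inverse of $C$ whenever they refer to the inverse.'' What you have written is, in effect, the classical argument underlying that citation, carried out from scratch: the reduction of PCG to the min-max problem $\min_{p(0)=1, \deg p \le k} \max_{\lambda \in \Lambda} \abs{p(\lambda)}$ over the nonzero spectrum of $A \pinv{C}$, followed by the product polynomial $p = rs$ in which the degree-$q$ factor $r(\lambda) = \prod_i (1 - \lambda/\mu_i)$ annihilates the outliers while staying bounded by $1$ on $[l,u]$ (using $\lambda \le u < \mu_i$), and the shifted Chebyshev factor $s$ delivers the $2\bigl(\frac{\sqrt{u/l}-1}{\sqrt{u/l}+1}\bigr)^m \le \epsilon$ bound via $\ln\frac{1+t}{1-t} \ge 2t$ with $t = \sqrt{l/u}$ --- all of which checks out, including the constant $2/\epsilon$ inside the logarithm matching the theorem's iteration count. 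Where your writeup adds real value over the paper is precisely the part the paper waves through: you identify that the singular case needs the iterates and residuals to stay in the common range $W$, that $\pinv{C}$ acts as a genuine inverse there, and that the $C^{1/2}$ change of variables turns the preconditioned recurrence into plain CG for $\hat{A} = (\pinv{C})^{1/2} A (\pinv{C})^{1/2}$ with $\norm{\tilde{x}_k - \tilde{x}^*}_{\hat{A}} = \norm{x_k - x^*}_A$; this is exactly the content of the paper's ``immediate'' extension, made explicit. Two small points worth stating if you polish this: the conclusion $\norm{x_k - \pinv{A}b}_A \le \epsilon \norm{\pinv{A}b}_A$ presumes the initial iterate $x_0 = 0$ (otherwise the bound is relative to $\norm{x_0 - \pinv{A}b}_A$), and the $q$ outlying eigenvalues should be counted with multiplicity so that $r$ genuinely has degree $q$ and vanishes on all of them.
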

We recall that $\norm{x}_{A} \defeq \sqrt{x^{T} A x}$.
While Axelsson and Lindskog do not explicitly deal with the case in which
  $A$ and $C$ are positive-semidefinite with the same nullspace,
  the extension of their analysis to this case is immediate if one applies
  the pseudo-inverse of $C$ whenever they refer to the inverse.

\begin{proof}[Proof of Theorem~\ref{thm:time}]
As $G$ and $T$ are connected, both $L_{G}$ and $L_{T}$ have the same nullspace:
  the span of the all-1s vector.

Set
  $u = \left(\stretch{T}{G} \right)^{2/3}$ and $l = 1$.
Corollary~\ref{cor:count} tells us that $L_{G} \pinv{L_{T}}$
  has at most 
 $q = \left(\stretch{T}{G} \right)^{1/3}$ eigenvalues
  greater than $u$.
The theorem now follows from Theorem~\ref{thm:al}.
\end{proof}

\bibliographystyle{alpha}
\bibliography{precon}

\end{document}